\journal{Journal of Filomat}
\begin{document}
\begin{CJK*}{GBK}{song}

\begin{frontmatter}

\title{
Three limit representations of the core-EP inverse 
}
\author[1-address]{Mengmeng Zhou}
 \ead{mmz9209@163.com)}
\address[1-address]{School of Mathematics, Southeast University, Nanjing, Jiangsu 210096, China}

\author[2-address]{Jianlong Chen\corref{mycorrespondingauthor}}
\cortext[mycorrespondingauthor]{Corresponding author}
\ead{jlchen@seu.edu.cn)}
\address[2-address]{School of Mathematics, Southeast University, Nanjing, Jiangsu, 210096, China}

\author[3-address]{Tingting Li}
 \ead{littnanjing@163.com)}
\address[3-address]{School of Mathematics, Southeast University, Nanjing, Jiangsu 210096, China}

\author[4-address]{Dingguo Wang}
\ead{dingguo95@126.com}
\address[4-address]{School of Mathematical Sciences, Qufu Normal University,\\ Qufu, Shandong, 273165, China}

\begin{abstract}
 ~~~In this paper, we present three limit representations of the core-EP inverse. The first approach is based on the full-rank decomposition of a given matrix. The second and third approaches, which depend on the explicit expression of the core-EP inverse, are established. The corresponding limit representations of the dual core-EP inverse are also given. In particular, limit representations of the core and dual core inverse are derived.
\end{abstract}

\begin{keyword}
Core-EP inverse;
Core inverse;
Limit representation;

\MSC[2017]  15A09
\end{keyword}

\end{frontmatter}

\linenumbers

\section{Introduction}
\newcommand{\song}{\CJKfamily{song}}    
\newcommand{\kai}{\CJKfamily{kai}}      
\newcommand{\hei}{\CJKfamily{hei}}      
\newcommand{\you}{\CJKfamily{you}}      
\newcommand{\fs}{\CJKfamily{fs}}        
\numberwithin{equation}{section} \theoremstyle{plain}
\newtheorem*{thm*}{Main Theorem}
\newtheorem{theorem}{Theorem}[section]
\newtheorem{corollary}[theorem]{Corollary}
\newtheorem*{corollary*}{Corollary}
\newtheorem{claim}[theorem]{Claim}
\newtheorem*{claim*}{Claim}
\newtheorem{lemma}[theorem]{Lemma}
\newtheorem*{lemma*}{Lemma}
\newtheorem{proposition}[theorem]{Proposition}
\newtheorem*{proposition*}{Proposition}
\newtheorem{remark}[theorem]{Remark}
\newtheorem*{remark*}{Remark}
\newtheorem{example}[theorem]{Example}
\newtheorem*{example*}{Example}
\newtheorem{question}[theorem]{Question}
\newtheorem*{question*}{Question}
\newtheorem{definition}[theorem]{Definition}
\newtheorem*{definition*}{Definition}
\newtheorem{conjecture}{Conjecture}

In 1974, limit representation of the Drazin inverse was established by Meyer \cite{M}. In 1986, Kalaba and Rasakhoo \cite{KR} introduced limit representation of the Moore-Penrose inverse. In 1994, An alternative limit representation of the Drazin inverse was given by Ji \cite{J}. It is well known that the six kinds of classical generalized inverses: the Moore-Penrose inverse, the weighted Moore-Penrose inverse, the group inverse, the Drazin inverse, the Bott-Duffin inverse and the generalized Bott-Duffin inverse can be presented as particular generalized inverse $A^{(2)}_{T,S}$ with prescribed range and null space (see, for example, \cite{BG, CM, YCH, CSK}).

Let $T$ be a subspace of $\mathbb{C}^{n}$ and let $S$ be a subspace of $\mathbb{C}^{m}.$ The generalized inverse $A_{T,S}^{(2)}$ \cite{BG} of matrix $A\in\mathbb{C}^{m\times n}$ is the matrix $G\in\mathbb{C}^{n\times m}$ satisfying
$$ GAG=G, \mathcal{R}(G)=T, \mathcal{N}(G)=S.$$
In 1998, Wei \cite{W} established a unified limit representation of the generalized inverse $A^{(2)}_{T,S}.$ Let $A\in \mathbb{C}^{m\times n}$ be matrix of rank $r,$ let $T$ be a subspace of $\mathbb{C}^{n}$ of dimension $s\leq r$, and let $S$ be a subspace of $\mathbb{C}^{m}$ of dimension $m-s.$ Suppose $G\in \mathbb{C}^{n\times m}$ such that $R(G)=T$ and $N(G)=S.$ If $A^{(2)}_{T,S}$ exists, then
$$A^{(2)}_{T,S}=\lim_{\lambda\rightarrow 0}(GA-\lambda I)^{-1}G=\lim_{\lambda\rightarrow 0}G(AG-\lambda I)^{-1},$$
where $R(G)$ and $N(G)$ denote the range space and null space of $G,$ respectively.

In 1999, Stanimirovi\'{c} \cite{S} introduced a more general limit formula. Let $M$ and $N$ are two arbitrary $p\times q$ matrices, then
\begin{equation}\label{a1}
 \lim_{\lambda\rightarrow 0}(M^{*}N+\lambda I)^{-1}M^{*}=\lim_{\lambda\rightarrow 0}M^{*}(NM^{*}+\lambda I)^{-1}.
\end{equation}
The limit representation of generalized inverse $A^{(2)}_{T,S}$ is a special case of the above general formula.

In 2012, Liu et al. \cite{LYZW} introduced limit representation of the generalized inverse $A^{(2)}_{R(B),N(C)}$ in Banach space. Let $A\in \mathbb{C}^{m\times n}$ be matrix of rank $r$. Let $T$ be a subspace of $\mathbb{C}^{n}$ of dimension $s\leq r$, and let $S$ be a subspace of $\mathbb{C}^{m}$ of dimension $m-s.$ Suppose $B\in\mathbb{C}^{n\times s}$ and $C\in\mathbb{C}^{s\times m}$ such that $R(B)=T$ and $N(C)=S.$ If $A^{(2)}_{T,S}$ exists, then
$$A^{(2)}_{T,S}=\lim_{\lambda\rightarrow 0}B(CAB+\lambda I)^{-1}C. $$

In 2010, the core and dual core inverse were introduced by Baksalary and Trenkler for square matrices of index at most 1 in \cite{BT}. In 2014, the core inverse was extended to the core-EP inverse defined by Manjunatha Prasad and Mohana \cite{MPM}. The core-EP inverse coincides with the core inverse if the index of a given matrix is 1. In this paper, the dual conception of the core-EP inverse was called the dual core-EP inverse. The characterizations of the core-EP and core inverse were investigated in complex matrices and rings (see, for example, \cite{ZPC, YC, LC, RDD, HXW, WL, XCZ}).

From the above mentioned limit representations of the generalized inverse, we know that limit representation of the core-EP inverse similar to the form of (\ref{a1}) has not been investigated in the literature.

The purpose of this paper is to establish three limit representations of the core-EP inverse. The first approach is based on the full-rank decomposition of a given matrix. The second and third approaches, which depend on the explicit expression of the core-EP inverse, are represented. The corresponding limit representations of the dual core-EP inverse are also given. In particular, limit representations of the core and dual core inverse are derived.

\section{Preliminaries}
\label{Preliminaries}
In this section, we give some auxiliary definitions and lemmas.

For arbitrary matrix $A\in\mathbb{C}^{m\times n},$ the symbol $\mathbb{C}^{m\times n}$ denotes the set of all complex $m\times n$ matrices. $A^{*}$ and ${\rm rk}(A)$ denote the conjugate transpose and rank of $A$, respectively. $I$ is the identity matrix of an appropriate order. If $k$ is the smallest nonnegative integer such that ${\rm rk}(A^{k})={\rm rk}(A^{k+1}),$ then $k$ is called the index of $A$ and denoted by ${\rm ind}(A)$.

\begin{definition}\label{MP} \emph{\cite{BG}} Let $A\in\mathbb{C}^{m\times n}.$  The unique matrix $A^{\dagger}\in\mathbb{C}^{n\times m}$ is called the Moore-Penrose inverse of $A$ if it satisfies
$$AA^{\dagger}A=A, ~A^{\dagger}AA^{\dagger}=A^{\dagger}, ~(AA^{\dagger})^{*}=AA^{\dagger}, ~(A^{\dagger}A)^{*}=A^{\dagger}A.$$
\end{definition}

\begin{definition}\label{DI} \emph{\cite{BG}} Let $A\in\mathbb{C}^{n\times n}.$ The unique matrix $A^{D}\in\mathbb{C}^{n\times n}$ is called the Drazin inverse of $A$ if it satisfies
 $$ A^{k+1}A^{D}=A^{k},  ~A^{D}AA^{D}=A^{D},  ~AA^{D}=A^{D}A,$$
where $k={\rm ind}(A).$  When $k=1,$ the Drazin inverse reduced to the group inverse and it is denoted by $A^\#.$
\end{definition}

\begin{definition}\label{CD} \emph{\cite{BT}} A matrix $A^{\tiny\textcircled{\tiny\#}}\in\mathbb{C}^{n\times n}$ is called the core inverse of $A\in\mathbb{C}^{n\times n}$ if it satisfies
$$AA^{\tiny\textcircled{\tiny\#}}=P_{A} ~{\rm and} ~R(A^{\tiny\textcircled{\tiny\#}})\subseteq R(A) .$$
Dually, A matrix $A_{\tiny\textcircled{\tiny\#}}\in\mathbb{C}^{n\times n}$ is called the dual core inverse of $A\in\mathbb{C}^{n\times n}$ if it satisfies $$A_{\tiny\textcircled{\tiny\#}}A=P_{A^{*}} ~{\rm and} ~R(A_{\tiny\textcircled{\tiny\#}})\subseteq R(A^{*}) .$$
\end{definition}

\begin{definition}\label{cp} \emph{\cite{MPM}} A matrix $X\in\mathbb{C}^{n\times n}$, denoted by $A^{\tiny\textcircled{\tiny$\dagger$}}$, is called the core-EP inverse of $A\in\mathbb{C}^{n\times n}$ if it satisfies
$$XAX=X~and~ R(X)=R(X^{*})=R(A^{D}).$$
Dually, A matrix $X\in\mathbb{C}^{n\times n}$, denoted by $A_{\tiny\textcircled{\tiny$\dagger$}}$, is called the dual core-EP inverse of $A\in\mathbb{C}^{n\times n}$ if it satisfies
$$XAX=X~and~ R(X)=R(X^{*})=R((A^{*})^{D}).$$
\end{definition}

The core-EP inverse was extended from complex matrices to rings by Gao and Chen in \cite{YC}.
\begin{lemma}\label{db4} \emph{\cite{YC}} Let $A\in\mathbb{C}^{n\times n}$ with ${\rm ind}(A)=k$ and let $m$ be a positive integer with $m\geq k.$ Then
$ A^{\tiny\textcircled{\tiny$\dagger$}}=A^{D}A^{m}(A^{m})^{\dagger}.$
\end{lemma}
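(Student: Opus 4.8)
The goal is to show that $X := A^{D}A^{m}(A^{m})^{\dagger}$ satisfies the defining conditions of the core-EP inverse, namely $XAX = X$ and $R(X) = R(X^{*}) = R(A^{D})$. The plan is to verify these three conditions in turn, relying on standard identities for the Drazin and Moore–Penrose inverses together with the hypothesis $m \geq k = \operatorname{ind}(A)$.

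\emph{Step 1: the range inclusion $R(X) = R(A^{D})$.} Since $X = A^{D}\bigl(A^{m}(A^{m})^{\dagger}\bigr)$, we immediately get $R(X) \subseteq R(A^{D})$. For the reverse inclusion, the key fact is that $A^{m}(A^{m})^{\dagger}$ is the orthogonal projector onto $R(A^{m})$, and because $m \geq k$ we have $R(A^{D}) = R(A^{k}) = R(A^{m})$. Using $A^{D} = A^{D}AA^{D} = (A^{D})^{m+1}A^{m}$ (iterating $A^{D} = A^{D}AA^{D}$ and $AA^{D} = A^{D}A$), one writes $A^{D} = (A^{D})^{m+1}A^{m}$; since $R(A^{m}) = R(A^{m}(A^{m})^{\dagger})$ one can replace $A^{m}$ by $A^{m}(A^{m})^{\dagger}A^{m}$, yielding $A^{D} = (A^{D})^{m+1}A^{m}(A^{m})^{\dagger}A^{m} = X A^{m}$ after regrouping, hence $R(A^{D}) \subseteq R(X)$. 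This gives $R(X) = R(A^{D})$.

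\emph{Step 2: the range of the conjugate transpose, $R(X^{*}) = R(A^{D})$.} Compute $X^{*} = \bigl((A^{m})^{\dagger}\bigr)^{*}(A^{m})^{*}(A^{D})^{*} = \bigl((A^{m})^{*}\bigr)^{\dagger}(A^{m})^{*}(A^{D})^{*}$. The product $\bigl((A^{m})^{*}\bigr)^{\dagger}(A^{m})^{*}$ is the orthogonal projector $P_{R(A^{m})}$ onto $R(A^{m}) = R(A^{D})$, so $R(X^{*}) \subseteq R(A^{D})$. For equality it suffices to check that $X^{*}$ acts as the identity on enough of $R(A^{D})$; concretely, applying $X^{*}$ to $(A^{D})^{*}y$ for a suitable preimage recovers any element of $R\bigl(P_{R(A^{m})}(A^{D})^{*}\bigr) = R((A^{D})^{*}) = R(A^{D})$ (the last equality because $R(A^{D}) = R(A^{k})$ and, as $A^{D}$ and $(A^{D})^{*}$ have the same rank and $\operatorname{ind}$-related range, one can instead argue via $\operatorname{rk}(X) = \operatorname{rk}(A^{D})$ from Step 1 and $\operatorname{rk}(X^{*}) = \operatorname{rk}(X)$). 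Combining, $R(X^{*}) = R(A^{D})$.

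\emph{Step 3: the equation $XAX = X$.} Substitute and regroup: $XAX = A^{D}A^{m}(A^{m})^{\dagger}\,A\,A^{D}A^{m}(A^{m})^{\dagger}$. Using that $A$ commutes with $A^{D}$ and with the projector onto $R(A^{m})$ when acting appropriately — more precisely $(A^{m})^{\dagger}A A^{D}A^{m} = (A^{m})^{\dagger}A^{m}$ since $AA^{D}A^{m} = A^{m}$ for $m \geq 1$ — the middle collapses and one is left with $XAX = A^{D}A^{m}(A^{m})^{\dagger}A^{m}(A^{m})^{\dagger} = A^{D}A^{m}(A^{m})^{\dagger} = X$, where the penultimate step uses $(A^{m})^{\dagger}A^{m}(A^{m})^{\dagger} = (A^{m})^{\dagger}$. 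Finally, uniqueness of the core-EP inverse (Definition \ref{cp}) forces $X = A^{\tiny\textcircled{\tiny$\dagger$}}$.

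I expect Step 2 to be the main obstacle: showing $R(X^{*}) = R(A^{D})$ rather than merely $R(X^{*}) \subseteq R(A^{D})$ requires care, since $A^{D}$ need not be Hermitian and $R(A^{D})$ need not equal $R((A^{D})^{*})$ in general. The cleanest route is probably to avoid a direct range computation and instead combine $R(X) = R(A^{D})$ from Step 1 with a rank count: $\operatorname{rk}(X^{*}) = \operatorname{rk}(X) = \operatorname{rk}(A^{D})$, together with the inclusion $R(X^{*}) \subseteq R\bigl(((A^{m})^{*})^{\dagger}(A^{m})^{*}\bigr) = R(A^{m}) = R(A^{D})$, which pins down $R(X^{*}) = R(A^{D})$ by dimension. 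All remaining manipulations are routine applications of the identities $AA^{D} = A^{D}A$, $A^{k}A^{D}A = A^{k}$, $(A^{m})^{\dagger}A^{m}(A^{m})^{\dagger} = (A^{m})^{\dagger}$, and the projector properties of $A^{m}(A^{m})^{\dagger}$ and $(A^{m})^{\dagger}A^{m}$.
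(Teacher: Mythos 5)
The paper does not prove this lemma at all --- it is imported verbatim from the cited reference of Gao and Chen --- so there is no in-paper argument to compare against; your proposal supplies a self-contained verification against Definition \ref{cp}, which is a reasonable thing to do. Your Steps 2 and 3 are correct: $X^{*}=\bigl((A^{m})^{*}\bigr)^{\dagger}(A^{m})^{*}(A^{D})^{*}=A^{m}(A^{m})^{\dagger}(A^{D})^{*}$ gives $R(X^{*})\subseteq R(A^{m})=R(A^{k})=R(A^{D})$, and the rank count ${\rm rk}(X^{*})={\rm rk}(X)={\rm rk}(A^{D})$ upgrades this to equality; likewise $(A^{m})^{\dagger}AA^{D}A^{m}=(A^{m})^{\dagger}A^{m}$ collapses $XAX$ to $X$.

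The one genuine error is in Step 1: the identity $A^{D}=XA^{m}$ is false. Indeed $XA^{m}=A^{D}A^{m}(A^{m})^{\dagger}A^{m}=A^{D}A^{m}$, which is not $A^{D}$ (for invertible $A$ and $m=1$ it equals $I$, not $A^{-1}$); and your ``regrouping'' of $(A^{D})^{m+1}A^{m}(A^{m})^{\dagger}A^{m}$ into $XA^{m}$ silently replaces $(A^{D})^{m+1}A^{m}(A^{m})^{\dagger}$ by $A^{D}A^{m}(A^{m})^{\dagger}$, which are different matrices ($(A^{D})^{m+1}A^{m}=A^{D}$, so the former is $A^{D}(A^{m})^{\dagger}$). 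The conclusion $R(A^{D})\subseteq R(X)$ is nonetheless correct and the fix is immediate: from $XA^{m}=A^{D}A^{m}$ one gets $XA^{m}(A^{D})^{m}=A^{D}A^{m}(A^{D})^{m}=A^{D}(AA^{D})^{m}=A^{D}$, so $A^{D}=X\cdot A^{m}(A^{D})^{m}$ and hence $R(A^{D})\subseteq R(X)$. (Alternatively, argue directly with ranges: $R(X)=A^{D}\,R\bigl(A^{m}(A^{m})^{\dagger}\bigr)=A^{D}R(A^{m})=R(A^{D}A^{m})=R(A^{m})=R(A^{D})$, using $A^{D}A^{m+1}=A^{m}$.) With that single identity repaired, the verification is complete and correct.
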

Clearly, If $A\in\mathbb{C}^{n\times n}$ with ${\rm ind}(A)=k,$ then it has a unique core-EP inverse. So according to Lemma \ref{db4}, we have
\begin{equation}\label{c16}
 A^{\tiny\textcircled{\tiny$\dagger$}}=A^{D}A^{k}(A^{k})^{\dagger}
  =A^{D}A^{k+1}(A^{k+1})^{\dagger}
  =A^{k}(A^{k+1})^{\dagger}.
\end{equation}

As for an arbitrary matrix $A\in\mathbb{C}^{n\times n}$. If $A$ is nilpotent, then $A^{D}=0.$ In this case, $A^{\tiny\textcircled{\tiny$\dagger$}}
=A_{\tiny\textcircled{\tiny$\dagger$}}=0.$ This case is considered to be trivial. So we restrict the matrix $A$ to be non-nilpotent in this paper.

\begin{lemma}\label{db1}\emph{\cite{BG}}  Let $A\in\mathbb{C}^{n\times n}$ with ${\rm ind}(A)=k.$ If $A=B_{1}G_{1}$ is a full-rank decomposition and $G_{i}B_{i}=B_{i+1}G_{i+1}$ are also full-rank decompositions, $i=1,2, ...,k-1. $ Then the following statements hold:
\begin{itemize}
\item[$(1)$] $G_{k}B_{k}$ is invertible.
\item[$(2)$] $A^{k}=B_{1}B_{2}\cdot\cdot\cdot B_{k}G_{k}\cdot\cdot\cdot G_{2}G_{1}.$
\item[$(3)$] $A^{D}=B_{1}B_{2}\cdot\cdot\cdot B_{k}(G_{k}B_{k})^{-k-1}G_{k}\cdot\cdot\cdot G_{1}.$
\end{itemize}
In particular, for $k=1,$ then $G_{1}B_{1}$ is invertible and
$$A^\#=B_{1}(G_{1}B_{1})^{-2}G_{1}.$$
\end{lemma}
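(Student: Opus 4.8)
\emph{Proof idea.} The plan is to establish (1), (2) and (3) in turn, using repeatedly two elementary facts: in a full-rank decomposition $M=FG$ of a rank-$r$ matrix, $F$ has full column rank and $G$ has full row rank; and left multiplication by a full-column-rank matrix and right multiplication by a full-row-rank matrix preserve rank (each is one-sided invertible, hence cancellable). Write the factor sizes as $B_i\in\mathbb{C}^{r_{i-1}\times r_i}$ and $G_i\in\mathbb{C}^{r_i\times r_{i-1}}$, where $r_0=n$ and $r_i={\rm rk}(B_i)={\rm rk}(G_i)$.

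For (2), I would induct on $j$ to prove $A^{j}=B_{1}\cdots B_{j}G_{j}\cdots G_{1}$ for $1\le j\le k$. The base case $j=1$ is the hypothesis $A=B_1G_1$; the inductive step is a telescoping computation: from $A\cdot A^{j}=(B_1G_1)(B_1\cdots B_jG_j\cdots G_1)$ one successively rewrites the interior blocks $G_iB_i$ as $B_{i+1}G_{i+1}$ for $i=1,\dots,j$ — which is precisely why the relations are assumed up to $i=k-1$ — arriving at $B_1\cdots B_{j+1}G_{j+1}\cdots G_1$. The same computation carried one step past $A^{k}$ gives $A^{k+1}=B_1\cdots B_k(G_kB_k)G_k\cdots G_1$.

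For (1), observe that $A^{j}=(B_1\cdots B_j)(G_j\cdots G_1)$ is itself a full-rank decomposition (first factor full column rank, second full row rank), so ${\rm rk}(A^{j})=r_j$; likewise, cancelling the outer one-sided invertible factors $B_1\cdots B_k$ and $G_k\cdots G_1$ from $A^{k+1}=B_1\cdots B_k(G_kB_k)G_k\cdots G_1$ gives ${\rm rk}(A^{k+1})={\rm rk}(G_kB_k)$. Since ${\rm ind}(A)=k$ forces ${\rm rk}(A^{k})={\rm rk}(A^{k+1})$, we get $r_k={\rm rk}(G_kB_k)$, and since $G_kB_k\in\mathbb{C}^{r_k\times r_k}$ this says $G_kB_k$ is invertible. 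For (3), set $P=B_1\cdots B_k$, $Q=G_k\cdots G_1$, $N=G_kB_k$, so that $A^k=PQ$ by (2) and $N$ is invertible by (1); the same telescoping idea yields the commutation relations $AP=PN$ and $QA=NQ$, whence $A^{k+j}=PN^{j}Q$ for all $j\ge0$ by induction. Comparing $A^{2k}=PN^{k}Q$ with $A^{2k}=(A^k)^2=PQPQ$ and cancelling $P$ on the left and $Q$ on the right gives $QP=N^{k}$, and hence $QAP=NQP=N^{k+1}$. Putting $X:=PN^{-k-1}Q$, one checks directly that $A^{k+1}X=PN(QP)N^{-k-1}Q=PQ=A^{k}$, that $XAX=PN^{-k-1}(QAP)N^{-k-1}Q=PN^{-k-1}Q=X$, and that $AX=PN^{-k}Q=XA$; by the uniqueness in Definition \ref{DI}, $A^{D}=X=B_1\cdots B_k(G_kB_k)^{-k-1}G_k\cdots G_1$. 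The case $k=1$ is this argument with an empty chain of relations: ${\rm ind}(A)=1$ gives ${\rm rk}(A)={\rm rk}(A^2)$, so $G_1B_1$ is invertible, and the formula reduces to $A^{\#}=B_1(G_1B_1)^{-2}G_1$.

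The main obstacle is the bookkeeping inside the telescoping steps — obtaining $A^k=PQ$, $AP=PN$, $QA=NQ$ and, above all, $QP=N^{k}$ — because this is where the whole chain of full-rank decompositions is used and where one must check that each substitution $G_iB_i\mapsto B_{i+1}G_{i+1}$ is available at the index needed. Once those identities and the cancellation principle for one-sided invertible matrices are in hand, the rank count for (1) and the verification of the Drazin axioms for (3) are routine.
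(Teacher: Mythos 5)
Your proposal is correct and complete in outline: the telescoping substitutions $G_iB_i\mapsto B_{i+1}G_{i+1}$ are invoked only for $i\le k-1$ where they are available, the rank count $\operatorname{rk}(A^{k})=r_k=\operatorname{rk}(G_kB_k)$ correctly yields invertibility, and the identities $AP=PN$, $QA=NQ$, $QP=N^{k}$ suffice to verify all three Drazin axioms for $X=PN^{-k-1}Q$. The paper itself gives no proof of this lemma (it is cited from Ben-Israel and Greville), and your argument is essentially the classical one from that reference -- Cline's method of successive full-rank factorizations -- so there is nothing further to reconcile.
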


According to \cite{BG}, it is also known that $A^{\dagger}=G_{1}^{*}(G_{1}G_{1}^{*})^{-1}(B_{1}^{*}B_{1})^{-1}B_{1}^{*}.$
\begin{lemma}\label{db2} \emph{\cite{WL}} Let $A\in\mathbb{C}^{n\times n}$ with ${\rm ind}(A)=1.$ If $A=MN$ is a full-rank decomposition, then
$$A^{\tiny\textcircled{\tiny\#}}=M(NM)^{-1}(M^{*}M)^{-1}M^{*}.$$
\end{lemma}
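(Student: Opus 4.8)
The plan is to verify directly that the matrix
$$X:=M(NM)^{-1}(M^{*}M)^{-1}M^{*}$$
satisfies the two conditions defining the core inverse in Definition \ref{CD}; since a matrix of index at most $1$ has a \emph{unique} core inverse (see \cite{BT}), exhibiting such an $X$ will force $X=A^{\tiny\textcircled{\tiny\#}}$.

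First I would check that $X$ is well defined. Because $A=MN$ is a full-rank decomposition and ${\rm ind}(A)=1$, Lemma \ref{db1} (applied with $k=1$, $B_{1}=M$, $G_{1}=N$) gives that $NM$ is invertible; and since $M$ has full column rank, $M^{*}M$ is invertible. Next, for the range inclusion, note that $R(X)\subseteq R(M)$, while $R(A)=R(MN)=R(M)$ because $N$ has full row rank; hence $R(X)\subseteq R(A)$, which is the second defining condition.

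For the projection condition I would simply compute
$$AX=MN\cdot M(NM)^{-1}(M^{*}M)^{-1}M^{*}=M(NM)(NM)^{-1}(M^{*}M)^{-1}M^{*}=M(M^{*}M)^{-1}M^{*}.$$
The matrix $M(M^{*}M)^{-1}M^{*}$ is Hermitian and idempotent with range $R(M)=R(A)$, so it is precisely the orthogonal projector $P_{A}$ onto $R(A)$. Thus $AX=P_{A}$, which is the first defining condition, and by uniqueness $X=A^{\tiny\textcircled{\tiny\#}}$.

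There is essentially no serious obstacle here: the hypothesis ${\rm ind}(A)=1$ enters only through the invertibility of $NM$ (without which $X$ need not be defined, nor need the core inverse exist), and the one fact to recognize is that $M(M^{*}M)^{-1}M^{*}$ equals $P_{A}$. As an alternative route one could instead substitute the full-rank formulas $A^{\#}=M(NM)^{-2}N$ from Lemma \ref{db1} and $A^{\dagger}=N^{*}(NN^{*})^{-1}(M^{*}M)^{-1}M^{*}$ from the remark following it into the known identity $A^{\tiny\textcircled{\tiny\#}}=A^{\#}AA^{\dagger}$ and cancel, arriving at the same expression; the direct verification above is shorter and self-contained.
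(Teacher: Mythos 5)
Your argument is correct and complete. Note that the paper itself offers no proof of this lemma --- it is quoted from reference \cite{WL} --- so there is no internal proof to compare against; your direct verification is a perfectly good self-contained substitute. All the essential points are in place: the invertibility of $NM$ is exactly where the hypothesis ${\rm ind}(A)=1$ enters (via Lemma \ref{db1} with $k=1$); the computation $AX=M(M^{*}M)^{-1}M^{*}$ is right, and this matrix is Hermitian, idempotent, and has range $R(M)=R(MN)=R(A)$ (the last equality because $N$ has full row rank), so it is the orthogonal projector $P_{A}$; the inclusion $R(X)\subseteq R(M)=R(A)$ gives the second defining condition; and the uniqueness of the core inverse for index-one matrices (which follows from $R(A)\cap N(A)=\{0\}$) closes the argument. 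Your alternative route through $A^{\tiny\textcircled{\tiny\#}}=A^{\#}AA^{\dagger}$ with the full-rank formulas for $A^{\#}$ and $A^{\dagger}$ also checks out and is essentially the derivation one finds in the cited source; the direct verification you chose is the more economical of the two.
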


\begin{lemma}\label{db3} \emph{\cite{HXW}} Let $A\in\mathbb{C}^{n\times n}$ with ${\rm ind}(A)=k.$ Then $A$ can be written as the sum of matrices $A_{1}$ and $A_{2}$, i.e., $A=A_{1}+A_{2},$ where
\begin{itemize}
\item[$(1)$] ${\rm rk}(A^{2}_{1})={\rm rk}(A_{1})$.
\item[$(2)$] $A^{k}_{2}=0.$
\item[$(3)$] $A^{*}_{1}A_{2}=A_{2}A_{1}=0.$
\end{itemize}
\end{lemma}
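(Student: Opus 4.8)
The plan is to extract the two summands from a unitary block-upper-triangular (``core-EP'') normal form of $A$ and then verify the three required identities by $2\times 2$ block arithmetic. Put $r={\rm rk}(A^{k})$. Since ${\rm ind}(A)=k$ gives ${\rm rk}(A^{k})={\rm rk}(A^{k+1})$ and $R(A^{k+1})\subseteq R(A^{k})$, we get $R(A^{k+1})=R(A^{k})$; hence $A\,R(A^{k})=R(A^{k+1})=R(A^{k})$, so $R(A^{k})$ is $A$-invariant and the restriction $A|_{R(A^{k})}$ is a surjective, hence bijective, endomorphism of the $r$-dimensional space $R(A^{k})$. Choosing an orthonormal basis of $R(A^{k})$ and completing it to an orthonormal basis of $\mathbb{C}^{n}$ produces a unitary $U=[\,U_{1}\mid U_{2}\,]$ with $R(U_{1})=R(A^{k})$. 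Invariance forces $AU_{1}=U_{1}T$ for some $T\in\mathbb{C}^{r\times r}$, and writing the remaining columns in this orthonormal basis gives $AU_{2}=U_{1}S+U_{2}N$; together these say
$$A=U\begin{bmatrix}T & S\\ 0 & N\end{bmatrix}U^{*},$$
where $T$ is invertible precisely because $A|_{R(A^{k})}$ is bijective.

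Next I would pin down $N$. Raising the block form to the $k$-th power yields $A^{k}=U\left[\begin{smallmatrix}T^{k}&\ast\\0&N^{k}\end{smallmatrix}\right]U^{*}$, and clearing the upper-right block by column operations (possible since $T^{k}$ is invertible) shows ${\rm rk}(A^{k})={\rm rk}(T^{k})+{\rm rk}(N^{k})=r+{\rm rk}(N^{k})$; therefore ${\rm rk}(N^{k})=0$, i.e.\ $N^{k}=0$. Now set
$$A_{1}=U\begin{bmatrix}T & S\\ 0 & 0\end{bmatrix}U^{*},\qquad A_{2}=U\begin{bmatrix}0 & 0\\ 0 & N\end{bmatrix}U^{*},$$
so that $A=A_{1}+A_{2}$. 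Then $A_{1}^{2}=U\left[\begin{smallmatrix}T^{2}&TS\\0&0\end{smallmatrix}\right]U^{*}$ has rank ${\rm rk}(T^{2})=r={\rm rk}(A_{1})$ since $T$ is invertible, which is $(1)$; $A_{2}^{k}=U\left[\begin{smallmatrix}0&0\\0&N^{k}\end{smallmatrix}\right]U^{*}=0$ is $(2)$; and the block products $\left[\begin{smallmatrix}0&0\\0&N\end{smallmatrix}\right]\!\left[\begin{smallmatrix}T&S\\0&0\end{smallmatrix}\right]=0$ and $\left[\begin{smallmatrix}T&S\\0&0\end{smallmatrix}\right]^{*}\!\left[\begin{smallmatrix}0&0\\0&N\end{smallmatrix}\right]=\left[\begin{smallmatrix}T^{*}&0\\S^{*}&0\end{smallmatrix}\right]\!\left[\begin{smallmatrix}0&0\\0&N\end{smallmatrix}\right]=0$ give $A_{2}A_{1}=0$ and $A_{1}^{*}A_{2}=0$, which is $(3)$.

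The one point that needs care is that the basis must be chosen \emph{orthonormal} and adapted to $R(A^{k})$, not arbitrary. Replacing $U$ by a general invertible $P$ would still deliver a splitting with $A_{1}A_{2}=A_{2}A_{1}=0$ (the classical core--nilpotent decomposition), but the sharper requirement $A_{1}^{*}A_{2}=0$ in $(3)$ hinges on $U^{*}U=I$, which makes $A_{1}^{*}=U\left[\begin{smallmatrix}T^{*}&0\\S^{*}&0\end{smallmatrix}\right]U^{*}$ block lower-triangular in the \emph{same} unitary coordinates; once the normal form is in hand, everything else is routine block bookkeeping. If uniqueness of the decomposition were also wanted, one would argue in addition that $(1)$--$(3)$ force $R(A_{1})=R(A^{k})$, which fixes both summands; but the statement asserts only existence.
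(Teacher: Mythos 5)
Your argument is correct and complete, and it is essentially the standard construction behind this result: the paper itself gives no proof (the lemma is quoted from the core-EP decomposition paper of Wang), but the unitary normal form $A=U\left[\begin{smallmatrix}T&S\\0&N\end{smallmatrix}\right]U^{*}$ with $T$ nonsingular and $N$ nilpotent that you derive is exactly the form the paper later invokes from that reference in the proof of its Theorem 4.1. Your key steps --- the $A$-invariance of $R(A^{k})$ forcing the zero $(2,1)$ block, the rank count giving $N^{k}=0$, and the observation that orthonormality of the basis is what yields $A_{1}^{*}A_{2}=0$ rather than merely $A_{1}A_{2}=0$ --- are all sound.
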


\section{The first approach }
In this section, we present limit representations of the core-EP and dual core-EP inverse, which depend on the full-rank decomposition of a given matrix. In particular, limit representations of the core and dual core inverse are also given.

\begin{theorem}\label{e2}  Let $A\in\mathbb{C}^{n\times n}$ with ${\rm ind}(A)=k,$ and the full-rank decomposition of $A$ be as in Lemma \ref{db1}. Then
\begin{eqnarray*}
 A^{\tiny\textcircled{\tiny$\dagger$}}&=&\lim_{\lambda\rightarrow 0 }B(BG_{k}B_{k})^{*}(BG_{k}B_{k}(BG_{k}B_{k})^{*}+\lambda I)^{-1}\\
   &=& \lim_{\lambda\rightarrow 0 }B((BG_{k}B_{k})^{*}BG_{k}B_{k}+\lambda I)^{-1}(BG_{k}B_{k})^{*}
\end{eqnarray*}
and
\begin{eqnarray*}
  A_{\tiny\textcircled{\tiny$\dagger$}} &=&\lim_{\lambda\rightarrow 0 }((G_{k}B_{k}G)^{*}G_{k}B_{k}G+\lambda I)^{-1}(G_{k}B_{k}G)^{*}G \\
   &=&\lim_{\lambda\rightarrow 0 }(G_{k}B_{k}G)^{*}(G_{k}B_{k}G(G_{k}B_{k}G)^{*}+\lambda I)^{-1}G,
\end{eqnarray*}
where $B=B_{1}B_{2}\cdot\cdot\cdot B_{k}$ and $G=G_{k}\cdot\cdot\cdot G_{2}G_{1}.$
\end{theorem}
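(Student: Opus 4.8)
The plan is to reduce everything to the general limit formula \eqref{a1} of Stanimirovi\'{c} together with the full-rank expressions from Lemma \ref{db1}. First I would observe that, by Lemma \ref{db1}, writing $B=B_1B_2\cdots B_k$ and $G=G_k\cdots G_2G_1$, we have $A^k=BG$ and $G_kB_k$ is invertible, and moreover $A^{\tiny\textcircled{\tiny$\dagger$}}=A^D A^k(A^k)^\dagger$ by Lemma \ref{db4}. Using part (3) of Lemma \ref{db1} for $A^D$ and the formula $(A^k)^\dagger=G^*(GG^*)^{-1}(B^*B)^{-1}B^*$ (the full-rank-decomposition formula for the Moore-Penrose inverse applied to $A^k=BG$), I would simplify the product $A^D A^k(A^k)^\dagger$. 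The key intermediate identity I expect to need is a clean closed form such as
\[
A^{\tiny\textcircled{\tiny$\dagger$}}=B(G_kB_k)^{-k}\,(BG_kB_k)^\dagger\quad\text{or}\quad A^{\tiny\textcircled{\tiny$\dagger$}}=B\bigl(BG_kB_k\bigr)^\dagger,
\]
after the powers of $G_kB_k$ are absorbed; verifying which of these is correct is the first real computation. Here the telescoping $B G_k\cdots G_1 = A^k$ and $G_k B_k$ being invertible are what make the cancellations go through.

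Once the core-EP inverse is written as $B\cdot M^\dagger$ for the matrix $M=BG_kB_k$ (up to the inner invertible factor, which I will track carefully), I would invoke \eqref{a1} with a suitable choice of $M$ and $N$. Concretely, taking $N=M=BG_kB_k$ in \eqref{a1} gives
\[
M^\dagger=\lim_{\lambda\to 0}(M^*M+\lambda I)^{-1}M^*=\lim_{\lambda\to 0}M^*(MM^*+\lambda I)^{-1},
\]
since for any $P$, $P^\dagger=\lim_{\lambda\to 0}(P^*P+\lambda I)^{-1}P^*$. Multiplying on the left by the constant matrix $B$ (constants pass through the limit) yields exactly the two displayed expressions for $A^{\tiny\textcircled{\tiny$\dagger$}}$, provided the inner invertible factor $(G_kB_k)^{-\text{something}}$ has indeed been fully absorbed into $M^\dagger$; confirming that absorption is the crux of the argument.

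For the dual core-EP inverse I would use the standard duality $A_{\tiny\textcircled{\tiny$\dagger$}}=(A^*)^{\tiny\textcircled{\tiny$\dagger$}}{}^{*}$, or equivalently apply the same computation to $A^*$. Since a full-rank decomposition $A=B_1G_1,\ G_iB_i=B_{i+1}G_{i+1}$ dualizes to a full-rank decomposition of $A^*$ with the roles of the $B_i$ and $G_i^*$ interchanged, one gets $(A^*)^k=G^*B^*$ and the corresponding "$M$" becomes $(G_kB_kG)^*$ after transposing; taking conjugate transposes of the two limit formulas for $(A^*)^{\tiny\textcircled{\tiny$\dagger$}}$ and using $(X^{-1})^*=(X^*)^{-1}$ and $((P^*P+\lambda I)^{-1}P^*)^*=P(PP^*+\lambda I)^{-1}$ then produces precisely the stated formulas for $A_{\tiny\textcircled{\tiny$\dagger$}}$, with the right-multiplication by $G$ appearing as the transpose of the left-multiplication by $B$. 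The main obstacle, as noted, is the bookkeeping in the first step: getting the exact power of $(G_kB_k)^{-1}$ to cancel so that the core-EP inverse collapses to the simple form $B\,M^\dagger$; once that identity is in hand, the limit passage is immediate from \eqref{a1} and continuity of matrix multiplication.
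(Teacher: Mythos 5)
Your approach is sound but takes a genuinely different route from the paper's, and the ``crux'' you flag does resolve favorably: the correct closed form is the second of your two candidates, $A^{\tiny\textcircled{\tiny$\dagger$}}=B(BG_{k}B_{k})^{\dagger}$. The identity that makes the absorption work is $GB=(G_{k}B_{k})^{k}$, proved by the same telescoping that gives $AB=BG_{k}B_{k}$; with it, $A^{D}A^{k}(A^{k})^{\dagger}=B(G_{k}B_{k})^{-k-1}(GB)(B^{*}B)^{-1}B^{*}=B(G_{k}B_{k})^{-1}(B^{*}B)^{-1}B^{*}$, which equals $B(BG_{k}B_{k})^{\dagger}$ because $BG_{k}B_{k}$ has full column rank. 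The paper avoids this power-counting entirely: it sets $X=B(BG_{k}B_{k})^{\dagger}$ and verifies the defining properties of Definition \ref{cp} directly --- $XAX=X$ using only $AB=BG_{k}B_{k}$, and $R(X)=R(X^{*})=R(A^{D})=R(B)$ via rank inequalities --- then invokes uniqueness of the core-EP inverse. So your route buys an explicit derivation from $A^{\tiny\textcircled{\tiny$\dagger$}}=A^{D}A^{k}(A^{k})^{\dagger}$ at the price of an identity ($GB=(G_{k}B_{k})^{k}$) that is not stated in Lemma \ref{db1} and must be supplied, while the paper's route trades that for a routine range/rank verification. Your handling of the dual case via $A_{\tiny\textcircled{\tiny$\dagger$}}=((A^{*})^{\tiny\textcircled{\tiny$\dagger$}})^{*}$ is correct and arguably cleaner than the paper's ``similarly''; the final limit passage via (\ref{a1}) (equivalently the Moore--Penrose limit representation) is common to both.
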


\begin{proof} Let $B=B_{1}B_{2}\cdot\cdot\cdot B_{k}$, $G=G_{k}\cdot\cdot\cdot G_{2}G_{1}$ and $X=B(BG_{k}B_{k})^{\dagger}.$ Suppose that $A\in\mathbb{C}^{n\times n}$ with ${\rm ind}(A)=k,$ and the full-rank decomposition of $A$ be as in Lemma \ref{db1}. From \cite{S}, we know that
\begin{equation}\label{h1}
 A^{\dagger}=\lim_{\lambda\rightarrow 0 }A^{*}(AA^{*}+\lambda I)^{-1}=\lim_{\lambda\rightarrow 0 }(A^{*}A+\lambda I)^{-1}A^{*}.
\end{equation}
So it is sufficient to verify $X= A^{\tiny\textcircled{\tiny$\dagger$}}.$ Since
\begin{equation}\label{e3}
  AB = B_{1}G_{1}B_{1}\cdot\cdot\cdot B_{k}
   =  B_{1}B_{2}G_{2}B_{2}\cdot\cdot\cdot B_{k}
   = \cdot\cdot\cdot
   =  BG_{k}B_{k},
\end{equation}
$$XAX=B(BG_{k}B_{k})^{\dagger}AB(BG_{k}B_{k})^{\dagger}\overset{(\ref{e3})}{=}B(BG_{k}B_{k})^{\dagger}=X.$$
From Lemma \ref{db1},we obtain
$$B(BG_{k}B_{k})^{\dagger}=B(G_{k}B_{k})^{-1}(B^{*}B)^{-1}B^{*}.$$
Therefore, $$X^{*}=B(B^{*}B)^{-1}((G_{k}B_{k})^{-1})^{*}B^{*}.$$
It is easy verify that $B^{*}B,$ $GG^{*}$ and $G_{k}B_{k}$ are invertible. Hence
$${\rm rk}(B)={\rm rk}(B(G_{k}B_{k})^{*}B^{*}B)\leq {\rm rk}(B(BG_{k}B_{k})^{*})\leq rk(B),$$
$${\rm rk}(B)={\rm rk}(B(B^{*}B)^{-1}((G_{k}B_{k})^{-1})^{*}B^{*}B)\leq {\rm rk}(X^{*})\leq rk(B),$$
$${\rm rk}(B)={\rm rk}(B(G_{k}B_{k})^{-k-1}GG^{*})\leq {\rm rk}(B(G_{k}B_{k})^{-k-1}G)\leq rk(B).$$
Thus, we have the following equalities:
$$R(X)=R(B(BG_{k}B_{k})^{\dagger})=R(B(BG_{k}B_{k})^{*})=R(B),$$
$$R(X^{*})=R(B(B^{*}B)^{-1}((G_{k}B_{k})^{-1})^{*}B^{*})=R(B),$$
$$R(A^{D})=R(B(G_{k}B_{k})^{-k-1}G)=R(B).$$
Namely, $$R(X)=R(X^{*})=R(A^{D}).$$
Similarly, we can verify $A_{\tiny\textcircled{\tiny$\dagger$}}=(G_{k}B_{k}G)^{\dagger}G.$ This completes the proof.
\end{proof}

Let $k=1$ in Theorem \ref{e2}. Then we obtain the following corollary.
\begin{corollary} \label{e4} Let $A\in\mathbb{C}^{n\times n}$ with ${\rm ind}(A)=1.$ If $A=BG$ is a full-rank decomposition,
then
\begin{eqnarray*}
  A^{\tiny\textcircled{\tiny\#}} &=& \lim_{\lambda\rightarrow 0}B(AB)^{*}(AB(AB)^{*}+\lambda I)^{-1} \\
   &=&  \lim_{\lambda\rightarrow 0}B((AB)^{*}AB+\lambda I)^{-1}(AB)^{*}
\end{eqnarray*}
and
\begin{eqnarray*}
  A_{\tiny\textcircled{\tiny\#}}   &=&   \lim_{\lambda\rightarrow 0}((GA)^{*}GA+\lambda I)^{-1}(GA)^{*}G\\
   &=&  \lim_{\lambda\rightarrow 0}(GA)^{*}(GA(GA)^{*}+\lambda I)^{-1}G.
 \end{eqnarray*}
\end{corollary}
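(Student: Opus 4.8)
The plan is to deduce Corollary \ref{e4} directly from Theorem \ref{e2} by specializing to the index-one case, where the full-rank decomposition chain collapses to a single step. First I would observe that when ${\rm ind}(A)=1$ we may take $k=1$, so the decomposition $A=B_1G_1$ with the notation of Lemma \ref{db1} becomes simply $A=BG$ with $B=B_1$ and $G=G_1$, and there are no further decompositions $G_iB_i=B_{i+1}G_{i+1}$ to perform. Consequently $B_k=B_1=B$ and $G_k=G_1=G$, so the matrix $BG_kB_k$ appearing in Theorem \ref{e2} becomes $BG_1B_1=(B_1G_1)B_1=AB$, using $A=B_1G_1$; this is exactly the $k=1$ instance of the identity (\ref{e3}), namely $AB=BG_kB_k$. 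Likewise $G_kB_kG$ becomes $G_1B_1G_1=G_1(B_1G_1)=GA$.

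Having made these substitutions, the first displayed formula of Theorem \ref{e2} reads
\[
A^{\tiny\textcircled{\tiny$\dagger$}}=\lim_{\lambda\rightarrow 0}B(AB)^{*}(AB(AB)^{*}+\lambda I)^{-1}=\lim_{\lambda\rightarrow 0}B((AB)^{*}AB+\lambda I)^{-1}(AB)^{*},
\]
and the second displayed formula becomes
\[
A_{\tiny\textcircled{\tiny$\dagger$}}=\lim_{\lambda\rightarrow 0}((GA)^{*}GA+\lambda I)^{-1}(GA)^{*}G=\lim_{\lambda\rightarrow 0}(GA)^{*}(GA(GA)^{*}+\lambda I)^{-1}G.
\]
The only remaining point is to replace the core-EP and dual core-EP inverses by the core and dual core inverses. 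This is immediate from the remark following Definition \ref{cp}: the core-EP inverse coincides with the core inverse precisely when ${\rm ind}(A)=1$, and dually the dual core-EP inverse coincides with the dual core inverse in that case, so $A^{\tiny\textcircled{\tiny$\dagger$}}=A^{\tiny\textcircled{\tiny\#}}$ and $A_{\tiny\textcircled{\tiny$\dagger$}}=A_{\tiny\textcircled{\tiny\#}}$. Substituting these identifications yields the four limit formulas of the corollary.

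I do not anticipate a genuine obstacle here, since the corollary is a pure specialization; the only thing requiring a little care is the bookkeeping that identifies $BG_kB_k$ with $AB$ and $G_kB_kG$ with $GA$ when $k=1$, which is why I would spell out that step explicitly rather than just asserting ``put $k=1$.'' One could alternatively give a self-contained proof by invoking Lemma \ref{db2}, which already provides the explicit formula $A^{\tiny\textcircled{\tiny\#}}=M(NM)^{-1}(M^{*}M)^{-1}M^{*}$ for a full-rank decomposition $A=MN$, together with Stanimirovi\'{c}'s limit formula (\ref{h1}) applied to $AB=B(GB)$ as its own full-rank-type factorization; but since Theorem \ref{e2} has just been proved in full generality, the cleanest route is simply to record the $k=1$ case as a corollary.
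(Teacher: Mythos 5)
Your proposal is correct and matches the paper's own treatment: the paper derives the corollary simply by setting $k=1$ in Theorem \ref{e2}, which is exactly your specialization (with $BG_kB_k=AB$, $G_kB_kG=GA$, and the identification of the core-EP inverse with the core inverse when ${\rm ind}(A)=1$). Your explicit bookkeeping of these identifications is a slightly more careful write-up of the same one-line argument.
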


\section{The second and third approaches }
In this section, we present two types of limit representations of the core-EP and dual core-EP inverse, which depend on their own explicit representation. In particular, limit representations of the core and dual core inverse are also given.

Based on Definition \ref{cp} and Lemma \ref{db4}, we present the second approach by using the following equation firstly:
$$A^{\tiny\textcircled{\tiny$\dagger$}}A^{k+1}=A^{D}A^{k}(A^{k})^{\dagger}A^{k+1}=A^{k},$$
where $k={\rm ind}(A).$

\begin{theorem}\label{f1} Let $A\in\mathbb{C}^{n\times n}$ with ${\rm ind}(A)=k.$ Then
\begin{itemize}
\item[$(1)$] $A^{\tiny\textcircled{\tiny$\dagger$}}=\lim\limits_{\lambda\rightarrow 0}A^{k}(A^{k})^{\ast}(A^{k+1}(A^{k})^{\ast}+\lambda I)^{-1},$
\item[$(2)$] $A_{\tiny\textcircled{\tiny$\dagger$}}=\lim\limits_{\lambda\rightarrow 0}((A^{k})^{\ast}A^{k+1}+\lambda I)^{-1}(A^{k})^{\ast}A^{k}.$
\end{itemize}
\end{theorem}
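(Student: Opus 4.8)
The plan is to prove part~(1) directly and then deduce part~(2) from it by duality. For the duality, observe that Definition~\ref{cp} gives $A_{\tiny\textcircled{\tiny$\dagger$}}=\bigl((A^{\ast})^{\tiny\textcircled{\tiny$\dagger$}}\bigr)^{\ast}$: conjugate‑transposing turns $XAX=X$ into $X^{\ast}A^{\ast}X^{\ast}=X^{\ast}$ and interchanges the range conditions, since $(A^{\ast})^{D}=(A^{D})^{\ast}$; moreover ${\rm ind}(A^{\ast})=k$, $(A^{\ast})^{k}=(A^{k})^{\ast}$ and $(A^{\ast})^{k+1}=(A^{k+1})^{\ast}$. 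Hence, applying the formula in~(1) to $A^{\ast}$ and taking the conjugate transpose of the resulting limit produces exactly the formula in~(2) (the shift $\bar\lambda$ tends to $0$ together with $\lambda$), so everything reduces to~(1).

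For~(1), set $Z=A^{k+1}(A^{k})^{\ast}$. The paper records $A^{\tiny\textcircled{\tiny$\dagger$}}A^{k+1}=A^{k}$ just before the theorem; right‑multiplying by $(A^{k})^{\ast}$ gives $A^{\tiny\textcircled{\tiny$\dagger$}}Z=A^{k}(A^{k})^{\ast}$, and therefore for every $\lambda$ with $Z+\lambda I$ invertible
\begin{equation*}
A^{\tiny\textcircled{\tiny$\dagger$}}(Z+\lambda I)=A^{k}(A^{k})^{\ast}+\lambda A^{\tiny\textcircled{\tiny$\dagger$}},\qquad\text{so}\qquad A^{\tiny\textcircled{\tiny$\dagger$}}-A^{k}(A^{k})^{\ast}(Z+\lambda I)^{-1}=\lambda A^{\tiny\textcircled{\tiny$\dagger$}}(Z+\lambda I)^{-1}.
\end{equation*}
It therefore suffices to show that $\lambda A^{\tiny\textcircled{\tiny$\dagger$}}(Z+\lambda I)^{-1}\to 0$ as $\lambda\to 0$; this simultaneously establishes that the limit in~(1) exists.

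The heart of the matter is that $Z=A^{k+1}(A^{k})^{\ast}$ has index at most $1$. From ${\rm ind}(A)=k$ we have $R(A^{k+1})=R(A^{k})$, $N(A^{k+1})=N(A^{k})$ and $R(A^{k})\cap N(A^{k})=\{0\}$; combining these with $R((A^{k})^{\ast})\cap N(A^{k})=\{0\}$ yields $N(Z)=N((A^{k})^{\ast})=R(A^{k})^{\perp}$ and $R(Z)=R(A^{k})$, so $\mathbb{C}^{n}=R(Z)\oplus N(Z)$ and $Z$ is similar to $\bigl(\begin{smallmatrix}C&0\\0&0\end{smallmatrix}\bigr)$ with $C$ invertible. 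Consequently $\lambda(Z+\lambda I)^{-1}\to I-ZZ^{\#}$, the projector onto $N(Z)$ along $R(Z)$. Finally $A^{\tiny\textcircled{\tiny$\dagger$}}(I-ZZ^{\#})=0$, because its range $N(Z)=R(A^{k})^{\perp}$ is contained in $N(A^{\tiny\textcircled{\tiny$\dagger$}})$: indeed, by Definition~\ref{cp} together with the well‑known identity $R(A^{D})=R(A^{k})$, one has $N(A^{\tiny\textcircled{\tiny$\dagger$}})=R\bigl((A^{\tiny\textcircled{\tiny$\dagger$}})^{\ast}\bigr)^{\perp}=R(A^{D})^{\perp}=R(A^{k})^{\perp}$. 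Passing to the limit in the displayed identity proves~(1).

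I expect the main obstacle to be that middle step: verifying cleanly that $A^{k+1}(A^{k})^{\ast}$ has index at most $1$ with precisely the stated range and null space, and identifying $\lim_{\lambda\to 0}\lambda(Z+\lambda I)^{-1}$. If one prefers to avoid the group‑inverse limit, an equivalent route is to use the core–EP (Schur) form in which $A$ is unitarily similar to a block upper‑triangular matrix $\bigl(\begin{smallmatrix}T&S\\0&N\end{smallmatrix}\bigr)$ with $T$ invertible of size ${\rm rk}(A^{k})$ and $N$ nilpotent with $N^{k}=0$: then $A^{k}(A^{k})^{\ast}$, $A^{k+1}(A^{k})^{\ast}$ and $A^{\tiny\textcircled{\tiny$\dagger$}}$ are simultaneously block‑diagonalised, with a common invertible Hermitian block $H$ on the upper‑left support, and the limit is computed entrywise to be $\bigl(\begin{smallmatrix}H(TH)^{-1}&0\\0&0\end{smallmatrix}\bigr)=\bigl(\begin{smallmatrix}T^{-1}&0\\0&0\end{smallmatrix}\bigr)$ in this basis, i.e.\ $A^{\tiny\textcircled{\tiny$\dagger$}}$. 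Part~(2) then follows from~(1) by the duality above with no additional work.
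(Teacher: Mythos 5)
Your proof is correct, but your primary argument takes a genuinely different route from the paper's. The paper proves part (1) by brute force on the core--EP decomposition: it writes $A=U\left[{\begin{smallmatrix}T&S\\0&N\end{smallmatrix}}\right]U^{*}$ with $T$ nonsingular and $N$ nilpotent of index $k$ (Lemma \ref{db3}), computes the blocks of $A^{k}(A^{k})^{*}$ and $A^{k+1}(A^{k})^{*}+\lambda I$ explicitly, notes that $T^{k}(T^{k})^{*}+\hat{T}\hat{T}^{*}$ is positive definite, and evaluates the block limit to obtain $U\left[{\begin{smallmatrix}T^{-1}&0\\0&0\end{smallmatrix}}\right]U^{*}=A^{\tiny\textcircled{\tiny$\dagger$}}$; part (2) is dispatched as ``analogous.'' You instead argue coordinate-free: from $A^{\tiny\textcircled{\tiny$\dagger$}}A^{k+1}=A^{k}$ you reduce (1) to showing $\lambda A^{\tiny\textcircled{\tiny$\dagger$}}(Z+\lambda I)^{-1}\rightarrow 0$ for $Z=A^{k+1}(A^{k})^{*}$, and you obtain this from the group invertibility of $Z$ (via $N(Z)=R(A^{k})^{\perp}$ and $R(Z)=R(A^{k})$), the standard limit $\lambda(Z+\lambda I)^{-1}\rightarrow I-ZZ^{\#}$, and the identification $N(A^{\tiny\textcircled{\tiny$\dagger$}})=R(A^{D})^{\perp}=R(A^{k})^{\perp}$. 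All of these steps check out; the only elided detail is that $R(Z)=R(A^{k})$ needs the rank count ${\rm rk}(Z)=n-\dim N(Z)={\rm rk}(A^{k})$ together with $R(Z)\subseteq R(A^{k+1})=R(A^{k})$, which is immediate. Your approach buys a structural explanation of \emph{why} the limit exists (the index of $A^{k+1}(A^{k})^{*}$ is at most one), and your derivation of (2) from (1) via $A_{\tiny\textcircled{\tiny$\dagger$}}=((A^{*})^{\tiny\textcircled{\tiny$\dagger$}})^{*}$ is cleaner than the paper's unstated ``analogous'' computation; the paper's block computation, in exchange, is shorter once the decomposition is in hand and reuses machinery shared with its other results. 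The alternative route you sketch at the end is precisely the paper's proof.
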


\begin{proof} Suppose that $A\in\mathbb{C}^{n\times n}$ with ${\rm ind}(A)=k.$ Let the core-EP decomposition be as in Lemma \ref{db3}. From \cite{HXW}, we know that there exists a unitary matrix $U$ such that $A=U\left[ {{\begin{matrix}
  T&S\\
  0&N\\
\end{matrix} }} \right]U^{*},$ where $T$ is a nonsingular matrix, ${\rm rk}(T)={\rm rk}(A^{k})$ and $N$ is a nilpotent matrix of index $k.$ The core-EP inverse of $A$ is $A^{\tiny\textcircled{\tiny$\dagger$}}=U\left[ {{\begin{matrix}
  T^{-1}&0\\
  0&0\\
\end{matrix} }} \right]U^{*}.$ By a direct computation, we obtain
\begin{equation*}\label{l1}
  A^{k}=U\left[ {{\begin{matrix}
  T^{k}&\hat{T}\\
  0&0\\
\end{matrix} }} \right]U^{*}, A^{k+1}=U\left[ {{\begin{matrix}
  T^{k+1}&T\hat{T}\\
  0&0\\
\end{matrix} }} \right]U^{*},
\end{equation*}
\begin{equation}\label{20}
  (A^{k})^{*}=U\left[ {{\begin{matrix}
  (T^{k})^{\ast}&0\\
  \hat{T}^{*}&0\\
\end{matrix} }} \right]U^{\ast},~where ~\hat{T}=\sum\limits_{i=0}^{k-1}T^{i}SN^{k-1-i}.
\end{equation}

\begin{equation}\label{l1}
  A^{k}(A^{k})^{*}=U\left[ {{\begin{matrix}
  T^{k}(T^{k})^{\ast}+\hat{T}(\hat{T})^{*}&0\\
  0&0\\
\end{matrix} }} \right]U^{\ast},
\end{equation}
\begin{equation}\label{l2}
  A^{k+1}(A^{k})^{\ast}+\lambda I_{n}=U\left[ {{\begin{matrix}
  T^{k+1}(T^{k})^{\ast}+T\hat{T}(\hat{T})^{*}+\lambda I_{{\rm rk}(T)}&0\\
  0&\lambda I_{n-{\rm rk}(T)}\\
\end{matrix} }} \right]U^{\ast}.
\end{equation}
Since $T$ is nonsingular, $T^{k}(T^{k})^{*}+\hat{T}(\hat{T})^{*}$ is positive definite matrix.
Combining with (\ref{l1})and (\ref{l2}), we have

\begin{eqnarray*}
\begin{split}
&\lim_{\lambda\rightarrow 0}A^{k}(A^{k})^{\ast}(A^{k+1}(A^{k})^{\ast}+\lambda I)^{-1}
=\lim_{\lambda\rightarrow 0}U\left[{{\begin{matrix}
 T^{k}(T^{k})^{\ast}+\hat{T}(\hat{T})^{*}&0\\
  0&0\\
\end{matrix}}}\right]\times\\
 &\left[{{\begin{matrix}
  T^{k+1}(T^{k})^{\ast}+T\hat{T}(\hat{T})^{*}+\lambda I_{{\rm rk}(T)}&0\\
 0&\lambda I_{n-{\rm rk}(T)}
\end{matrix}}}\right]^{-1} U^{*}
=U\left[ {{\begin{matrix}
  T^{-1}&0\\
  0&0\\
\end{matrix} }} \right]U^{*}.
\end{split}
\end{eqnarray*}
(2) It is analogous.
\end{proof}

Let $k=1,$ we have the following corollary.

\begin{corollary}\label{f2} Let $A\in\mathbb{C}^{n\times n}$ with ${\rm ind}(A)=1.$ Then
\begin{itemize}
\item[$(1)$] $A^{\tiny\textcircled{\tiny\#}}=\lim\limits_{\lambda\rightarrow 0}AA^{\ast}(A^{2}A^{\ast}+\lambda I)^{-1},$
\item[$(2)$] $A_{\tiny\textcircled{\tiny\#}}=\lim\limits_{\lambda\rightarrow 0}(A^{\ast}A^{2}+\lambda I)^{-1}A^{\ast}A.$
\end{itemize}
\end{corollary}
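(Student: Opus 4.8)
The plan is to derive this corollary directly from Theorem~\ref{f1} by specializing $k=1$. The only points that require attention are (i) checking that the formula in Theorem~\ref{f1}(1) collapses to the stated expression when $k=1$, and (ii) identifying $A^{\tiny\textcircled{\tiny$\dagger$}}$ with $A^{\tiny\textcircled{\tiny\#}}$ and $A_{\tiny\textcircled{\tiny$\dagger$}}$ with $A_{\tiny\textcircled{\tiny\#}}$ in this case. For (ii), recall from the discussion after Definition~\ref{cp} (and from \cite{MPM}) that the core-EP inverse coincides with the core inverse precisely when ${\rm ind}(A)=1$, and dually for the dual core and dual core-EP inverses; so no separate argument is needed beyond citing this fact.

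For (i), I would simply substitute $k=1$ into the two limit formulas of Theorem~\ref{f1}. In part (1), $A^{k}(A^{k})^{\ast}=AA^{\ast}$ and $A^{k+1}(A^{k})^{\ast}+\lambda I=A^{2}A^{\ast}+\lambda I$, which yields
$$A^{\tiny\textcircled{\tiny\#}}=A^{\tiny\textcircled{\tiny$\dagger$}}=\lim_{\lambda\rightarrow 0}AA^{\ast}(A^{2}A^{\ast}+\lambda I)^{-1}.$$
In part (2), $(A^{k})^{\ast}A^{k+1}+\lambda I=A^{\ast}A^{2}+\lambda I$ and $(A^{k})^{\ast}A^{k}=A^{\ast}A$, giving
$$A_{\tiny\textcircled{\tiny\#}}=A_{\tiny\textcircled{\tiny$\dagger$}}=\lim_{\lambda\rightarrow 0}(A^{\ast}A^{2}+\lambda I)^{-1}A^{\ast}A.$$
This is exactly the claimed statement.

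There is essentially no obstacle here; the corollary is a routine specialization. The one thing worth double-checking is that Theorem~\ref{f1} is stated for all ${\rm ind}(A)=k$ with no hidden restriction $k\geq 2$ — it is not, so $k=1$ is admissible. (One could alternatively prove the corollary from scratch using the index-$1$ decomposition $A=U\begin{bmatrix}T & S\\ 0 & 0\end{bmatrix}U^{\ast}$ with $T$ nonsingular and $A^{\tiny\textcircled{\tiny\#}}=U\begin{bmatrix}T^{-1} & 0\\ 0 & 0\end{bmatrix}U^{\ast}$, but this merely reproduces the $k=1$ instance of the computation already carried out in the proof of Theorem~\ref{f1}, so the specialization route is cleaner.)
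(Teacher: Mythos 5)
Your proposal is correct and matches the paper exactly: the paper obtains this corollary by simply setting $k=1$ in Theorem~\ref{f1} and using the fact (noted in the introduction) that the core-EP inverse coincides with the core inverse when ${\rm ind}(A)=1$. No further commentary is needed.
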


Next, we present the third approach of limit representations of the core-EP inverse. From Definition \ref{cp} and equation (\ref{c16}), it is easy to know that
$$A^{\tiny\textcircled{\tiny$\dagger$}}=A^{k}(A^{k+1})^{\dagger} ~~and~~A_{\tiny\textcircled{\tiny$\dagger$}}=(A^{k+1})^{\dagger}A^{k}, $$
where $k={\rm ind}(A).$
Combining with limit representation of the Moore-Penrose inverse of $A,$ we have the following theorem.

\begin{theorem}\label{f1} Let $A\in\mathbb{C}^{n\times n}$ with ${\rm ind}(A)=k.$ Then
\begin{eqnarray*}
  A^{\tiny\textcircled{\tiny$\dagger$}}&=&\lim_{\lambda\rightarrow 0}A^{k}(A^{k+1})^{*}(A^{k+1}(A^{k+1})^{*}+\lambda I)^{-1}  \\
   &=&\lim_{\lambda\rightarrow 0}A^{k}((A^{k+1})^{*}A^{k+1}+\lambda I)^{-1}(A^{k+1})^{*}
\end{eqnarray*}
and
\begin{eqnarray*}
  A_{\tiny\textcircled{\tiny$\dagger$}}&=&\lim_{\lambda\rightarrow 0}((A^{k+1})^{*}A^{k+1}+\lambda I)^{-1}(A^{k+1})^{*}A^{k}\\
   &=&\lim_{\lambda\rightarrow 0}(A^{k+1})^{*}(A^{k+1}(A^{k+1})^{*}+\lambda I)^{-1}A^{k}
\end{eqnarray*}
\end{theorem}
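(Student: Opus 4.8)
The plan is to reduce this final theorem to the identity $A^{\tiny\textcircled{\tiny$\dagger$}}=A^{k}(A^{k+1})^{\dagger}$ and $A_{\tiny\textcircled{\tiny$\dagger$}}=(A^{k+1})^{\dagger}A^{k}$, which are already established in the text just before the statement via Definition \ref{cp} and equation (\ref{c16}), together with the two standard limit forms of the Moore--Penrose inverse recalled in (\ref{h1}), namely $M^{\dagger}=\lim_{\lambda\rightarrow 0}M^{*}(MM^{*}+\lambda I)^{-1}=\lim_{\lambda\rightarrow 0}(M^{*}M+\lambda I)^{-1}M^{*}$. Applying this with $M=A^{k+1}$ gives
\begin{equation*}
(A^{k+1})^{\dagger}=\lim_{\lambda\rightarrow 0}(A^{k+1})^{*}(A^{k+1}(A^{k+1})^{*}+\lambda I)^{-1}=\lim_{\lambda\rightarrow 0}((A^{k+1})^{*}A^{k+1}+\lambda I)^{-1}(A^{k+1})^{*}.
\end{equation*}

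First I would left-multiply these two expressions for $(A^{k+1})^{\dagger}$ by $A^{k}$; since matrix multiplication by the fixed matrix $A^{k}$ is continuous, it commutes with the limit $\lambda\rightarrow 0$, so
\begin{equation*}
A^{\tiny\textcircled{\tiny$\dagger$}}=A^{k}(A^{k+1})^{\dagger}=\lim_{\lambda\rightarrow 0}A^{k}(A^{k+1})^{*}(A^{k+1}(A^{k+1})^{*}+\lambda I)^{-1}=\lim_{\lambda\rightarrow 0}A^{k}((A^{k+1})^{*}A^{k+1}+\lambda I)^{-1}(A^{k+1})^{*},
\end{equation*}
which are precisely the two claimed formulas for the core-EP inverse. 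Dually, right-multiplying the two limit expressions for $(A^{k+1})^{\dagger}$ by $A^{k}$ and again pulling $A^{k}$ inside the limit yields $A_{\tiny\textcircled{\tiny$\dagger$}}=(A^{k+1})^{\dagger}A^{k}$ in the two stated forms. The only facts needed are the explicit expressions $A^{\tiny\textcircled{\tiny$\dagger$}}=A^{k}(A^{k+1})^{\dagger}$, $A_{\tiny\textcircled{\tiny$\dagger$}}=(A^{k+1})^{\dagger}A^{k}$, and the continuity of left/right multiplication; no further case analysis is required.

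There is essentially no substantive obstacle here — this theorem is a corollary of the preceding discussion rather than an independent argument. The one point that merits a line of care is justifying that the limits on the right-hand sides actually exist, so that one may legitimately write $A^{k}\lim = \lim A^{k}(\cdot)$; this is immediate because $(A^{k+1}(A^{k+1})^{*}+\lambda I)$ and $((A^{k+1})^{*}A^{k+1}+\lambda I)$ are invertible for all sufficiently small $\lambda>0$ (their limits $(A^{k+1})^{\dagger}$ exist by (\ref{h1})), hence the products with the fixed matrices $A^{k}$ and $(A^{k+1})^{*}$ converge. Since $A$ is assumed non-nilpotent, $A^{k+1}\neq 0$ and $(A^{k+1})^{\dagger}$ is the genuine Moore--Penrose inverse, so all four identities hold verbatim; I would simply state that the dual statements follow "analogously" after the first two are proved.
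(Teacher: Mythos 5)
Your proposal is correct and is essentially the paper's own argument: the authors likewise derive this theorem by combining the identities $A^{\tiny\textcircled{\tiny$\dagger$}}=A^{k}(A^{k+1})^{\dagger}$ and $A_{\tiny\textcircled{\tiny$\dagger$}}=(A^{k+1})^{\dagger}A^{k}$ with the limit representation of the Moore--Penrose inverse applied to $A^{k+1}$, stating the theorem as an immediate consequence. Your added remark on the existence of the limits and the continuity of multiplication by the fixed matrix $A^{k}$ only makes explicit what the paper leaves implicit.
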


Let $k=1,$ we have the following corollary.

\begin{corollary}\label{f2} Let $A\in\mathbb{C}^{n\times n}$ with ${\rm ind}(A)=1.$ Then
\begin{eqnarray*}
   A^{\tiny\textcircled{\tiny\#}} &=&\lim_{\lambda\rightarrow 0}A(A^{2})^{*}(A^{2}(A^{2})^{*}+\lambda I)^{-1}  \\
   &=&  \lim_{\lambda\rightarrow 0}A((A^{2})^{*}A^{2}+\lambda I)^{-1}(A^{2})^{*}
\end{eqnarray*}
and
\begin{eqnarray*}
   A_{\tiny\textcircled{\tiny\#}}  &=& \lim_{\lambda\rightarrow 0}((A^{2})^{*}A^{2}+\lambda I)^{-1}(A^{2})^{*}A \\
   &=& \lim_{\lambda\rightarrow 0}(A^{2})^{*}(A^{2}(A^{2})^{*}+\lambda I)^{-1}A.
\end{eqnarray*}
\end{corollary}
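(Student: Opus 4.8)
The plan is to obtain this corollary as the direct $k=1$ specialization of the preceding third-approach theorem, so the entire content is carried by that theorem together with the identification of the index-one core-EP and dual core-EP inverses with the core and dual core inverses.

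First I would invoke the fact recalled in the introduction that the core-EP inverse coincides with the core inverse whenever ${\rm ind}(A)=1$; that is, $A^{\tiny\textcircled{\tiny$\dagger$}}=A^{\tiny\textcircled{\tiny\#}}$ in this case. Dually, the dual core-EP inverse coincides with the dual core inverse, so $A_{\tiny\textcircled{\tiny$\dagger$}}=A_{\tiny\textcircled{\tiny\#}}$. These two identities convert the left-hand sides of the preceding theorem into the left-hand sides of the corollary, and they are the only ingredients not already contained in that theorem.

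Next I would substitute $k=1$ into the four limit formulas of the third-approach theorem. Under this substitution $A^{k}=A$ and $A^{k+1}=A^{2}$, so the building blocks $A^{k}(A^{k+1})^{*}$, $(A^{k+1})^{*}A^{k+1}$, $A^{k+1}(A^{k+1})^{*}$, and $(A^{k+1})^{*}A^{k}$ become $A(A^{2})^{*}$, $(A^{2})^{*}A^{2}$, $A^{2}(A^{2})^{*}$, and $(A^{2})^{*}A$ respectively. Reading off the four limits of the theorem with these replacements gives precisely the two expressions for $A^{\tiny\textcircled{\tiny\#}}$ and the two expressions for $A_{\tiny\textcircled{\tiny\#}}$ displayed in the corollary.

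I do not expect any genuine obstacle here: no new limit argument is required, since the theorem already guarantees that each limit exists and equals the corresponding generalized inverse for every index $k\geq 1$, in particular for $k=1$. The only point deserving mention is the index-one identification $A^{\tiny\textcircled{\tiny$\dagger$}}=A^{\tiny\textcircled{\tiny\#}}$ and its dual, which follows from the definitions and the cited literature. Hence the corollary follows at once by specialization.
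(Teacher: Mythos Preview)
Your proposal is correct and matches the paper's approach exactly: the paper simply states ``Let $k=1$'' and reads off the corollary from the preceding theorem, relying implicitly on the index-one identification $A^{\tiny\textcircled{\tiny$\dagger$}}=A^{\tiny\textcircled{\tiny\#}}$ (and its dual) that you make explicit.
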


Let $k=1.$ From \cite{BT}, we know that $A^{\tiny\textcircled{\tiny\#}}=(A^{2}A^{\dagger})^{\dagger}.$ According to the above corollary, it is known that
$A^{\tiny\textcircled{\tiny\#}}=A(A^{2})^{\dagger}.$ Since the core inverse is unique, we obtain the following corollary.

\begin{corollary}\label{f3} Let $A\in\mathbb{C}^{n\times n}$ with ${\rm ind}(A)=1.$ Then $(A^{2}A^{\dagger})^{\dagger} =A(A^{2})^{\dagger}.$
\end{corollary}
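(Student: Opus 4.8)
The plan is to show directly that $A(A^2)^\dagger$ satisfies the two defining conditions of the core inverse from Definition \ref{CD}, and then invoke uniqueness. Concretely, for $A\in\mathbb{C}^{n\times n}$ with $\mathrm{ind}(A)=1$, I would set $X=A(A^2)^\dagger$ and verify that $AX=P_A$ (the orthogonal projector onto $R(A)$) and that $R(X)\subseteq R(A)$. Since Corollary \ref{f3}'s statement is really the identity $(A^2A^\dagger)^\dagger=A(A^2)^\dagger$, and since $(A^2A^\dagger)^\dagger=A^{\tiny\textcircled{\tiny\#}}$ is already known from \cite{BT}, the whole claim reduces to confirming that the explicit matrix $A(A^2)^\dagger$ really is the core inverse. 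The remark preceding the corollary has essentially already extracted the formula $A^{\tiny\textcircled{\tiny\#}}=A(A^2)^\dagger$ from Corollary \ref{f2} (the $k=1$ case of Theorem \ref{f1}), so strictly speaking one may simply cite that; but to make the argument self-contained I would re-derive it.

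The key steps, in order, are as follows. First, recall that $\mathrm{ind}(A)=1$ means $R(A)=R(A^2)$ and $N(A)=N(A^2)$, equivalently $\mathrm{rk}(A)=\mathrm{rk}(A^2)$. Second, establish $R(X)\subseteq R(A)$: since $X=A(A^2)^\dagger$ is a product with leftmost factor $A$, clearly $R(X)\subseteq R(A)$, with no index hypothesis even needed. Third, compute $AX=A\cdot A(A^2)^\dagger=A^2(A^2)^\dagger=P_{A^2}$, the orthogonal projector onto $R(A^2)$. Fourth, use $\mathrm{ind}(A)=1$ to get $R(A^2)=R(A)$, hence $P_{A^2}=P_A$, so $AX=P_A$. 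Together steps two through four show $X$ meets both conditions in Definition \ref{CD}, so $X=A^{\tiny\textcircled{\tiny\#}}$ by uniqueness of the core inverse. Fifth, since $(A^2A^\dagger)^\dagger$ is also equal to $A^{\tiny\textcircled{\tiny\#}}$ by the result of \cite{BT} cited just before the corollary, we conclude $(A^2A^\dagger)^\dagger=A(A^2)^\dagger$.

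I do not anticipate a serious obstacle here; this is a short verification. The only point requiring a moment's care is the identity $A^2(A^2)^\dagger=P_{R(A^2)}$, which is just the standard fact that $MM^\dagger$ is the orthogonal projector onto $R(M)$ (from the Moore–Penrose axioms $MM^\dagger M=M$ and $(MM^\dagger)^*=MM^\dagger$ in Definition \ref{MP}), and the equally standard equivalence $\mathrm{ind}(A)=1\iff R(A)=R(A^2)$. One could alternatively avoid even Definition \ref{CD} and argue purely by uniqueness: Corollary \ref{f2} gives $A^{\tiny\textcircled{\tiny\#}}=A(A^2)^\dagger$ as a limit, the remark recalls $A^{\tiny\textcircled{\tiny\#}}=(A^2A^\dagger)^\dagger$ from \cite{BT}, and uniqueness of $A^{\tiny\textcircled{\tiny\#}}$ forces the two expressions to coincide — which is exactly the phrasing the excerpt already uses. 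I would present the projector-based verification as the cleaner stand-alone argument and mention the uniqueness shortcut as the one-line alternative.
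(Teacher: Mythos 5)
Your proposal is correct, and its main argument takes a genuinely different (and more self-contained) route than the paper's. The paper obtains the identity by juxtaposing two already-derived expressions for the core inverse: the formula $A^{\tiny\textcircled{\tiny\#}}=(A^{2}A^{\dagger})^{\dagger}$ from \cite{BT} and the formula $A^{\tiny\textcircled{\tiny\#}}=A(A^{2})^{\dagger}$ read off from Corollary \ref{f2} (equivalently from equation (\ref{c16}) with $k=1$), and then invokes uniqueness of the core inverse --- exactly the ``one-line alternative'' you mention at the end. Your primary argument instead verifies directly from Definition \ref{CD} that $X=A(A^{2})^{\dagger}$ satisfies $R(X)\subseteq R(A)$ (immediate from the leftmost factor $A$) and $AX=A^{2}(A^{2})^{\dagger}=P_{R(A^{2})}=P_{A}$, the last equality using ${\rm ind}(A)=1\Leftrightarrow R(A^{2})=R(A)$; uniqueness then gives $X=A^{\tiny\textcircled{\tiny\#}}$, and combining with \cite{BT} yields the claim. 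All steps are sound: $MM^{\dagger}=P_{R(M)}$ is standard, and the uniqueness you appeal to is the same uniqueness the paper itself uses. What your route buys is independence from Theorem \ref{f1}, Corollary \ref{f2} and Lemma \ref{db4}: the corollary becomes a stand-alone consequence of the definitions, at the cost of a few extra lines. What the paper's route buys is brevity within its own narrative, since the formula $A^{\tiny\textcircled{\tiny\#}}=A(A^{2})^{\dagger}$ has already been established there. Either version is acceptable; no gap.
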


\section{Examples}
In this section, we present two examples to illustrate the efficacy of the established limit representations in this paper.

\begin{example}\label{Ex5-1}
Let
$A=\left[ {{\begin{matrix}
1&1&2&5  \\
0&1&1&1  \\
0&3&3&1    \\
 1&0&1&4   \\
\end{matrix} }} \right]$, where ${\rm ind}(A)=2$ and ${\rm rk}(A)=3.$
Let $B_{1}=\left[ {{\begin{matrix}
1&1&5 \\
 0&1&1\\
 0&3&1  \\
1&0&4  \\
\end{matrix} }} \right],$  $G_{1}=\left[ {{\begin{matrix}
1&0&1&0\\
0&1&1&0\\
0&0&0&1\\
\end{matrix} }} \right],$
$B_{2}=\left[ {{\begin{matrix}
1&1\\
1&0\\
0&1\\
\end{matrix} }} \right],$  $G_{2}=\left[ {{\begin{matrix}
0&4&2\\
1&0&4\\
\end{matrix} }} \right].$\\
The exact core-EP inverse of $A$ is equal to\\
$A^{\tiny\textcircled{\tiny$\dagger$}}=\frac{1}{756}\left[ {{\begin{matrix}
80&4&-24&76\\
8&13&48&-5\\
-8&50&204&-58\\
72&-9&-72&81\\
\end{matrix} }} \right].$
Set $B=B_{1}B_{2}$ and $E=G_{2}B_{2}.$\\ Using matlab, we have
$$\lim_{\lambda\rightarrow 0}B(BE)^{*}(BE(BE)^{*}+\lambda I)^{-1}=\left[ {{\begin{matrix}
 \frac{20}{189}&\frac{1}{189}&-\frac{2}{63}&\frac{19}{189}\\
 \frac{2}{189}&\frac{13}{756}&\frac{4}{63}& -\frac{5}{756}\\
-\frac{2}{189}&\frac{25}{378}&\frac{17}{63}&-\frac{29}{378}\\
 \frac{2}{21}&-\frac{1}{84}&-\frac{2}{21}&\frac{3}{28}\\
\end{matrix} }} \right],$$

$$\lim_{\lambda\rightarrow 0}A^{2}(A^{2})^{*}(A^{3}(A^{2})^{*}+\lambda I)^{-1}=\left[ {{\begin{matrix}
 \frac{20}{189}&\frac{1}{189}&-\frac{2}{63}&\frac{19}{189}\\
 \frac{2}{189}&\frac{13}{756}&\frac{4}{63}& -\frac{5}{756}\\
-\frac{2}{189}&\frac{25}{378}&\frac{17}{63}&-\frac{29}{378}\\
 \frac{2}{21}&-\frac{1}{84}&-\frac{2}{21}&\frac{3}{28}\\
\end{matrix} }} \right],$$

$$\lim_{\lambda\rightarrow 0}A^{2}(A^{3})^{*}(A^{3}(A^{3})^{*}+\lambda I)^{-1}=\left[ {{\begin{matrix}
\frac{20}{189}&\frac{1}{189}&-\frac{2}{63}&\frac{19}{189}\\
 \frac{2}{189}&\frac{13}{756}&\frac{4}{63}& -\frac{5}{756}\\
-\frac{2}{189}&\frac{25}{378}&\frac{17}{63}&-\frac{29}{378}\\
 \frac{2}{21}&-\frac{1}{84}&-\frac{2}{21}&\frac{3}{28}\\
\end{matrix} }} \right].$$
\end{example}

\begin{example}\label{Ex5-2} Let $A=\left[ {{\begin{matrix}
1&0&3\\
4&0&2\\
2&0&1\\
\end{matrix} }} \right],$ where ${\rm rk}(A)=2$ and ${\rm ind}(A)=1.$ Let \\
$B=\left[ {{\begin{matrix}
1&3\\
4&2\\
2&1\\
\end{matrix} }} \right],$$G=\left[ {{\begin{matrix}
1&0&0\\
0&0&1\\
\end{matrix} }} \right].$ According to Lemma \ref{db2}, the exact core inverse of $A$ is equal to\\
$$A^{\tiny\textcircled{\tiny\#}}=B(B^{*}BGB)^{-1}B^{*}=\left[ {{\begin{matrix}
-0.2000&0.2400&0.1200\\
0.8000&-0.1600&-0.0800\\
0.4000 &-0.0800 &-0.0400\\
\end{matrix} }} \right]. $$
Using matlab, we obtain
$$\lim_{\lambda\rightarrow 0}B(AB)^{*}(AB(AB)^{*}+\lambda I)^{-1}=\left[ {{\begin{matrix}
-\frac{1}{5}&\frac{6}{25}&\frac{3}{25}\\
\frac{4}{5}&-\frac{4}{25}&-\frac{2}{25}\\
\frac{2}{5}&-\frac{2}{25}&-\frac{1}{25}\\
\end{matrix} }} \right],$$

$$\lim_{\lambda\rightarrow 0}AA^{*}(A^{2}A^{*}+\lambda I)^{-1}=\left[ {{\begin{matrix}
-\frac{1}{5}&\frac{6}{25}&\frac{3}{25}\\
\frac{4}{5}&-\frac{4}{25}&-\frac{2}{25}\\
\frac{2}{5}&-\frac{2}{25}&-\frac{1}{25}\\
\end{matrix} }} \right],$$

$$\lim_{\lambda\rightarrow 0}A(A^{2})^{*}(A^{2}(A^{2})^{*}+\lambda I)^{-1}=\left[ {{\begin{matrix}
-\frac{1}{5}&\frac{6}{25}&\frac{3}{25}\\
\frac{4}{5}&-\frac{4}{25}&-\frac{2}{25}\\
\frac{2}{5}&-\frac{2}{25}&-\frac{1}{25}\\
\end{matrix} }} \right].$$
\end{example}

\section*{ Acknowledgements}
This research is supported by the National Natural Science Foundation of China (No.11771076, No.11471186).

\section*{References}

\end{CJK*}
\end{document}